\def\seq#1#2#3{#1_{#2},\,\ldots,#1_{#3}}
\def\w{\widetilde}
\def\vv{{\underline{v}}}
\def\tt{{\underline{t}}}
\def\mm{\underline{m}}
\def\1{\underline{1}}
\def\Z{\Bbb Z}
\def\C{\Bbb C}
\def\OO{{\cal O}}
\def\X{{\cal X}}
\def\D{{\cal D}}
\def\CP{\Bbb C\Bbb P}
\newtheorem{theorem}{Theorem}
\newtheorem{lemma}{Lemma}
\newenvironment{remark}
{\smallskip\noindent{\bf Remark\/}.}{\smallskip\par}
\title{The Poincar\'e series of divisorial valuations in the
plane defines the topology of the
set of divisors \footnote{Math. Subject Class. 14H20,
32S45. Keywords: Poincar\'e series, divisorial valuations,
plane curve singularities.}
}
\author{
A.~Campillo
\and F.~Delgado
\thanks{First two authors were partially supported
by the grant MTM2007-64704. Address: University of Valladolid,
Dept.
of Algebra, Geometry and Topology, 47011 Valladolid, Spain. E-mail:
campillo\symbol{'100}agt.uva.es, fdelgado\symbol{'100}agt.uva.es}
\and S.M.~Gusein-Zade \thanks{The research was partially
supported by the grants
RFBR-007-00593, INTAS-05-7805 and  NWO-RFBR 047.011.2004.026.
Address: Moscow State University, Faculty of
Mathematics and Mechanics, Moscow, GSP-1, 119991, Russia. E-mail:
sabir\symbol{'100}mccme.ru}
}
\date{}
\begin{document}
\sloppy
\def\eps{\varepsilon}

\maketitle

\medskip
In \cite{yamamoto}, it was proved that the Alexander polynomial in
several variables of a (reducible) plane curve singularity (i.e.,
of the corresponding link) defines the topology of the curve
singularity and therefore its minimal embedded resolution. To a
plane curve singularity one associates a multi-index filtration on
the ring ${\cal O}_{\C^2,0}$ of germs of functions of two
variables defined by the orders of a function on
irreducible
components of the curve. In \cite{duke}, there was computed the
Poincar\'e series of this filtration which turned out to coincide
with the Alexander polynomial of the curve.
For a finite set of divisorial valuations on the ring ${\cal
O}_{\C^2,0}$ corresponding to some components of the exceptional
divisor of a modification of $(\C^2,0)$, in \cite{divisorial},
there was obtained a formula for the Poincar\'e series of the
corresponding multi-index filtration similar to the one from
\cite{duke}. Here we show that the Poincar\'e series of a set of
divisorial valuations on the ring ${\cal O}_{\C^2,0}$ defines
``the topology of the set of the divisors" in the sense that it
defines the minimal resolution of this set up to combinatorial
equivalence. In \cite{veys}, there was defined a notion of the
zeta-function of an ideal. This notion can be adapted to finite
sets of ideals giving a notion of the ``Alexander polynomial" (in
several variables) of a set of ideals (a mixture of notions
introduced in \cite{sabbah} and \cite{veys}). To a divisorial
valuation on the ring ${\cal O}_{\C^2,0}$ there corresponds a
natural ideal in the ring ${\cal O}_{\C^2,0}$. This ideal is
generated by equations of irreducible curves whose strict
transforms on the space of the modification intersect the
corresponding divisor. One can show that the Poincar\'e series of
a set of divisorial valuations on the ring ${\cal O}_{\C^2,0}$
coincides with the Alexander polynomial of the corresponding set
of ideals. In these terms, one can say that the Alexander
polynomial of a set of divisorial valuations on the ring ${\cal
O}_{\C^2,0}$ defines the topology of the set of divisors. If one
takes {\it\bf all} the components of the exceptional divisor of a
modification then the Poincar\'e series defines the topology of
the modification also for normal surface singularities:
\cite{cutkosky}. Notice that in this case the Poincar\'e series is
not, generally speaking, a topological invariant.

We also
give
a proof of the statement for curves somewhat simpler than the one
in \cite{yamamoto}.

Let $\pi: ({\cal X}, {\cal D})\to (\C^2,0)$ be a modification of
the complex plane $\C^2$, i.e., a proper analytic map of a
nonsingular surface ${\cal X}$ which is an isomorphism outside of
the origin in $\C^2$ and such that ${\cal D}= \pi^{-1}(0)$ is a
normal crossing divisor on ${\cal X}$. The modification $\pi$ is
obtained by a sequence of point blowing-ups. The exceptional
divisor ${\cal D}$ is the union of irreducible components
$E_{\sigma}$ ($\sigma\in \Gamma$), each of them is isomorphic to
the complex projective line $\CP^1$. The dual graph of the
modification $\pi: ({\cal X}, {\cal D})\to (\C^2,0)$ is the graph
whose vertices correspond to irreducible components $E_{\sigma}$
of the exceptional divisor ${\cal D}$ (i.e. to elements of the set
$\Gamma$), two vertices are connected by an edge iff the
corresponding components intersect (at a point). The dual graph of
the modification $\pi$ is a tree. The set $\Gamma$ of vertices of
the dual graph inherits a partial order defined by representation
of the modification as a sequence of blowing-ups: a component
$E_{\sigma'}$
is ``greater" than another component $E_{\sigma}$ if the
exceptional divisor of the minimal modification which contains
$E_{\sigma'}$ also contains $E_{\sigma}$ ($\sigma'>\sigma$). Two
modifications of the plane are {\it combinatorially equivalent\/}
if their dual graphs together with the partial orders of vertices
are isomorphic.

Let $E_{\sigma}$, $\sigma\in \Gamma$,  be a component of the
exceptional divisor ${\cal D}$. For a function germ $f$ from the
ring ${\cal O}_{\C^2,0}$ of germs of functions of two variables,
let $v_\sigma(f)$ be the multiplicity of the lifting $f\circ \pi$
of the function $f$ to the space $\cal X$ of the modification
along the component $E_{\sigma}$ ($v_{\sigma}(0):=\infty$). The
order function $v_{\sigma}: {\cal O}_{\C^2,0}  \to \Z_{\ge 0}\cup
\{\infty\}$ defines a valuation on the field of quotients of the
ring ${\cal O}_{\C^2,0}$: the divisorial valuation defined by the
component $E_{\sigma}$. (A function $v: \OO_{\C^2,0}\to \Z_{\ge
0}\cup \{\infty\}$ is called {\it an order function\/} if
$v(\lambda f) = v(f)$ for $\lambda\neq 0$ and $v(f_1+f_2)\ge \min
\{v(f_1), v(f_2)\}$.) Let $\stackrel{\bullet}{E_\sigma}$ be the
smooth part of the component $E_\sigma$ in the exceptional divisor
$\D$, i.e. $E_\sigma$ itself without intersection points with
other components of $\D$. Let $\w{L_\sigma}$ be a germ of a smooth
curve on the space $\X$ of the modification transversal to the
component $E_\sigma$ at a smooth point of $\D$, i.e. at a point of
$\stackrel{\bullet}{E_\sigma}$. The image $L_\sigma =
\pi(\w{L_\sigma})\subset (\C^2,0)$ is called {\it a curvette\/}
corresponding to the component $E_\sigma$.

Let us fix $r$ different components $E_{1}, \ldots, E_{r}$ of the
exceptional divisor ${\cal D}$ ($\{1,\ldots,r\}\subset \Gamma$),
let $\vv :=(\seq v1r)\in\Z^r$ and $\vv(f)
:=(v_1(f),\ldots,v_r(f))$ ($f\in {\cal O}_{\C^2,0}$). The map
$\vv: {\cal O}_{\C^2,0} \to (\Z\cup\{\infty\})^r$ defines a
multi-index filtration (by ideals) on the ring ${\cal
O}_{\C^2,0}$: for $\vv\in \Z^r$, the corresponding ideal is
$J(\vv) = \{f\in {\cal O}_{\C^2,0}: \; \vv(f)\ge \vv\}$. (Here
$(\seq v1r)\geq (\seq{v'}1r)$ if and only if $v_i\geq v'_i$ for
all $i = 1,\ldots,r$. It is sufficient to define the ideal
$J(\vv)$ only for nonnegative $\vv$, i.e. for $\vv \in
\Z_{\ge0}^r$. However, for the definition below, it is convenient
to assume that $\vv\in\Z^r$.)

Let $L(\seq t1r) := \sum\limits_{\vv\in\Z^r} \dim\left(
J(\vv)/J(\vv+\1)\right)\cdot\tt^{\,\vv}$ be a Laurent series in
the
variables $t_1$, \dots, $t_r$ (generally speaking, infinite in all
directions; here $\1 = (1,\ldots, 1)$). One can see that along each line in the
lattice $\Z^r$ parallel to a coordinate one the coefficient at
$\tt^{\,\vv}$
is the same for $\vv$ from the nonpositive part of the line. This
implies that
$$
P'(\seq t1r) = L(\seq t1r)\cdot \prod_{i=1}^r (t_i-1)
$$
is a power series in the variables $\seq t1r$, i.e. an element of
$\Z[[\seq t1r]]$. The series
$$
P_{\{v_i\}} (\seq t1r) = \frac{P'(\seq t1r)}{t_1\cdot\ldots\cdot
t_r -1}
$$
is called {\it the Poincar\'e series\/} of the collection
$\{v_i\}$ of divisorial valuations.

The minimal resolution of a set of divisorial valuations is the
minimal modification in which all the divisors appear.

\begin{theorem}\label{theo1}
The Poincar\'e series of a set of divisorial valuations on the
ring ${\cal O}_{\C^2,0}$ defines the minimal resolution of the set
of divisors up to combinatorial equivalence.
\end{theorem}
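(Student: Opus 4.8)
\medskip
\noindent
The plan is to use the explicit formula for the Poincar\'e series proved in \cite{divisorial}. Let $\pi':(\X',\D')\to(\C^2,0)$ be the minimal resolution of the set of divisors $\{\seq E1r\}$, let $\Gamma'$ be the vertex set of its dual graph, for $\sigma\in\Gamma'$ let $\delta_\sigma$ be the valence of $\sigma$, $E_\sigma$ the corresponding component and $L_\sigma$ a curvette of it, let $\sigma_i\in\Gamma'$ be the vertex carrying $E_i$, and put $\mm_\sigma:=(v_1(L_\sigma),\dots,v_r(L_\sigma))=(v_\sigma(L_1),\dots,v_\sigma(L_r))\in\Z_{\ge0}^r$. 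That formula reads
$$
P_{\{v_i\}}(\tt)=\prod_{\sigma\in\Gamma'}\bigl(1-\tt^{\,\mm_\sigma}\bigr)^{\delta_\sigma-2},
$$
so the leaves ($\delta_\sigma=1$) contribute denominator factors $(1-\tt^{\,\mm_\sigma})^{-1}$, the rupture vertices ($\delta_\sigma\ge3$) numerator factors, the valence-two vertices the factor $1$, and the root always has $\mm_\sigma=\1$. The task is to recover the combinatorial type of $\pi'$, i.e.\ the pair $(\Gamma',\le)$ up to isomorphism --- equivalently the Enriques diagram of the sequence of blowing-ups, which also encodes the dual graph --- from the product above.

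The first step is to extract from $P_{\{v_i\}}$ the multiset $\{(\mm_\sigma,\delta_\sigma)\,:\,\sigma\in\Gamma',\ \delta_\sigma\neq2\}$. Taking formal logarithms, $\log P_{\{v_i\}}(\tt)=-\sum_\sigma(\delta_\sigma-2)\sum_{k\ge1}k^{-1}\tt^{\,k\mm_\sigma}$; running through the monomials of $\log P_{\{v_i\}}$ in increasing lexicographic order on $\Z_{\ge0}^r$ and peeling off one factor $(1-\tt^{\,\mm})^{e}$ at each step recovers the vectors $\mm_\sigma$ together with their exponents $e_\sigma=\delta_\sigma-2$, provided the $\mm_\sigma$ attached to distinct vertices of $\Gamma'$ are pairwise distinct. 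This distinctness is a genuine, though not deep, property of the minimal resolution: it can be obtained from the strict monotonicity of $\sigma\mapsto v_\sigma(L_i)$ along each chain $\{\rho:\rho\le\sigma_i\}$, together with the fact that, by minimality, the maximal elements of $(\Gamma',\le)$ are exactly the $\sigma_i$.

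The second step --- the heart of the matter --- is to reconstruct the Enriques diagram from this multiset. The valence-two vertices are invisible in the product, but the sequences of multiplicities $v_\sigma(L_i)$ attached to chains of point blowing-ups are arithmetically rigid (governed by the continued-fraction, or Euclidean, relations of the proximity structure) and so are forced by their values at the leaves and rupture vertices. It is convenient to reduce to small $r$: the Poincar\'e series $P_{v_i}(t_i)$ of a single valuation and $P_{\{v_i,v_j\}}$ of a pair are recovered from $P_{\{v_i\}}$ by the standard relation between the Poincar\'e series of a filtration and those of its subfiltrations. From $P_{v_i}$ and the first step one reconstructs, for each $i$, the whole chain $\{\rho:\rho\le\sigma_i\}$ producing $E_i$ together with its proximities --- the missing multiplicities being forced as above --- with $E_i$ its maximal vertex; from $P_{\{v_i,v_j\}}$ one reads in addition the coincidence number $v_i(L_j)=v_j(L_i)$, which locates the vertex at which the chains to $E_i$ and $E_j$ split. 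Assembling these chains along their common initial segments (dictated by the pairwise splitting vertices) recovers $(\Gamma',\le)$ with its proximity relations, hence the combinatorial type of $\pi'$.

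The step I expect to be the main obstacle is this second one: proving that the fragmentary data surviving in $P_{\{v_i\}}$ --- only the vectors and valences at the leaves and rupture vertices, and only after passing to subcollections --- nonetheless rigidly determines the whole Enriques diagram, partial order and suppressed valence-two strings included, and that the pairwise splitting data glue consistently into one tree. The arithmetic rigidity of the multiplicity sequences of blowing-ups is the technical core; the distinctness lemma of the first step and the subcollection relation are smaller but genuine preliminaries.
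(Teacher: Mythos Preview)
Your plan matches the paper's proof in outline: product formula from \cite{divisorial}, projection to $r=1$ and $r=2$, recover each single resolution, recover pairwise intersection numbers, then assemble via the classical fact that branch types plus pairwise intersection multiplicities determine the resolution graph. Two points deserve correction.

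First, there is no ``standard relation between the Poincar\'e series of a filtration and those of its subfiltrations''. The projection formula (substitute $t_k=1$ for $k\notin\{i,j\}$ to obtain $P_{\{v_i,v_j\}}$) is a consequence of the product formula itself and is special to divisorial valuations; the paper remarks explicitly that it \emph{fails} for the filtrations coming from plane curve branches. You should derive it from the formula, not invoke it as a general principle.

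Second, and more substantively, you underestimate the step ``from $P_{\{v_i,v_j\}}$ one reads the coincidence number $v_i(L_j)$''. The vertex $\sigma_i$ carrying $E_i$ may well have valence~$2$ in the minimal resolution (this happens exactly when the ``last tail'' has length $c=0$, i.e.\ $\sigma_{g+1}=\tau_g$), so the factor $(1-\tt^{\,\mm_{\sigma_i}})$ is \emph{absent} from the product and $v_i(L_j)=m_{\sigma_i,j}$ is not directly visible. The paper handles this by looking at a maximal exponent $\mm_\sigma$ among the factors with Euler characteristic $+1$ and running a three-case analysis (comparing $m_{\sigma,1},m_{\sigma,2}$ against the known single-valuation data $m_g,m_{g+1},m'_{g'},m'_{g'+1}$) to recover the intersection multiplicity; in two of the three cases one must multiply by a suitable $e_{g-1}=\gcd(m_0,\dots,m_{g-1})$. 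Your sketch treats this as immediate, and that is the genuine gap.

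A smaller point: your distinctness argument (monotonicity of $\mm_\sigma$ along chains below each $\sigma_i$, plus maximal elements being the $\sigma_i$) handles comparable pairs only. For incomparable vertices the paper uses, in the two-variable case, the behaviour of the ratio $m_{\sigma,1}/m_{\sigma,2}$: constant below the separation vertex $s$, strictly larger on $(s,\sigma_{g+1}]$, strictly smaller on $(s,\sigma'_{g'+1}]$.
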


\begin{proof}
Let $(m_{\sigma \delta})$ be the inverse of minus the intersection
matrix $(E_\sigma \circ E_\delta)$ of components of the
exceptional divisor $\D$. For $\sigma\neq \delta$ the intersection
number $(E_\sigma\circ E_\delta)$  is equal to $1$ if the
components $E_\sigma$ and $E_\delta$ intersect (at a point) and is
equal to $0$ otherwise. The self-intersection number
$(E_\sigma\circ E_\sigma)$ is a negative integer. The numbers
$m_{\sigma \delta}$ are positive integers and $\det(m_{\sigma
\delta})=1$. The number $m_{\sigma \delta}$ is also equal to
$v_{\delta}(h_{\sigma}) = v_{\sigma}(h_\delta)$, where
$h_{\sigma}=0$ is an equation of a curvette $L_\sigma$
corresponding to the component $E_\sigma$ and is equal to the
intersection number $(L_\sigma\circ L_\delta)$ of curvettes
corresponding to the components $E_\sigma$ and $E_\delta$. For
$\sigma\in\Gamma$, let $\mm_{\,\sigma}:= (m_{\sigma 1}, \ldots,
m_{\sigma r})\in \Z^r_{\ge 0}$. Let $\chi(Z)$ be the Euler
characteristic of the space $Z$. According to \cite{divisorial}
one has:
\begin{equation}\label{eqPS}
P_{\{v_i\}} (\seq t1r) =
\prod\limits_{\sigma\in \Gamma} (1-
\tt^{\,\mm_\sigma})^{-\chi(\stackrel{\bullet}{E_\sigma})}\; .
\end{equation}

The Poincar\'e series $P_{\{v_i\}}(\seq t1r)$ as a power series in
$\seq t1r$ itself defines the factorization of the form
$\prod\limits_{\mm\in\Z_{\ge 0}^r\setminus\{0\}}(1-
\tt^{\,\mm})^{k_{\mm}}$. Moreover the equation~(\ref{eqPS})
implies {\it the projection formula\/}: if $\{\seq
i1{\ell}\}\subset\{1,\ldots,r\}$, then the Poincar\'e series of
the $\ell$-index filtration corresponding to the divisorial
valuations $v_{i_1},\ldots, v_{i_{\ell}}$ is obtained from the
Poincar\'e series $P_{\{v_i\}}(\seq t1r)$ by substituting the
variables $t_i$ with $i\notin \{\seq i1{\ell}\}$ by $1$.

\begin{remark}
The last property does not hold for the Poincar\'e series of the
filtration defined by orders of a function germ on irreducible
components of a plane curve singularity (see \cite{duke}). This
makes the proof of the corresponding statement for curves
(Theorem~2 below) somewhat different.
\end{remark}

\begin{figure}[h]
$$
\unitlength=1.00mm
\begin{picture}(80.00,20.00)(-10,13)
\thicklines \put(-5,30){\line(1,0){41}}
\put(44,30){\line(1,0){31}} \put(38,30){\circle*{0.5}}
\put(40,30){\circle*{0.5}} \put(42,30){\circle*{0.5}}
\put(30,10){\line(0,1){20}} \put(50,20){\line(0,1){10}}
\put(60,10){\line(0,1){20}} \put(10,15){\line(0,1){15}}
\thinlines
\put(20,30){\circle*{1}} \put(30,30){\circle*{1}}
\put(50,30){\circle*{1}} \put(60,30){\circle*{1}}
\put(65,30){\circle*{1}} \put(70,30){\circle*{1}}
\put(75,30){\circle*{1}} \put(75,30){\circle{2}}

\put(30,20){\circle*{1}}
\put(60,25){\circle*{1}}
\put(60,20){\circle*{1}}
\put(60,15){\circle*{1}}
\put(10,30){\circle*{1}}
\put(30,10){\circle*{1}} \put(50,20){\circle*{1}}
\put(60,10){\circle*{1}}
\put(-5,30){\circle*{1}}
\put(0,30){\circle*{1}} \put(5,30){\circle*{1}}
\put(15,30){\circle*{1}} \put(25,30){\circle*{1}}
\put(35,30){\circle*{1}} \put(45,30){\circle*{1}}
\put(55,30){\circle*{1}} \put(10,25){\circle*{1}}
\put(10,20){\circle*{1}} \put(10,15){\circle*{1}}
\put(30,25){\circle*{1}} \put(30,15){\circle*{1}}
\put(35,30){\circle*{1}} \put(-9,25){{\bf 1}=$\sigma_0$}
\put(11.5,14){$\sigma_1$}
\put(31.5,9){$\sigma_2$}
\put(61.5,9){$\sigma_g$}
\put(9,32){$\tau_1$} \put(29,32){$\tau_2$}
\put(57.5,33){$\tau_g$}
\put(67.5,33){$c$}
\put(77,27){$\sigma_{g+1}$}
\end{picture}
$$
\caption{The dual graph of a divisorial valuation.} \label{fig1}
\end{figure}
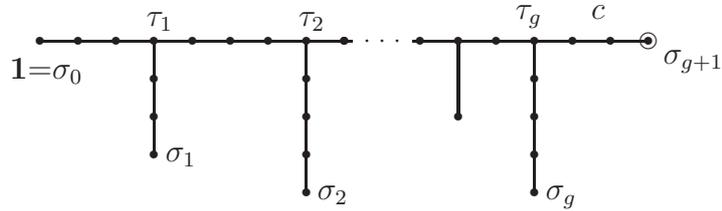

The dual graph of the minimal resolution of one divisorial
valuation on the ring $\OO_{\C^2,0}$ has the form shown on
Figure~{\ref{fig1}}. Here the corresponding component of the
exceptional divisor is marked by the circle {$\circ$}, $g$ is the
number of Puiseux pairs of a curvette corresponding to the
divisor, and the length $c$ of the ``last tail" may be equal to
zero ($\sigma_{g+1}=\tau_g$). The Poincar\'e series of this
divisorial filtration is equal to
\begin{equation}\label{eq2}
P(t) = \frac{\prod\limits_{i=1}^{\ell-1} (1-t^{m_{\tau_i}})}
{\prod\limits_{i=0}^{\ell} (1-t^{m_{\sigma_i}})
}
\end{equation}
where $m_\sigma:=m_{\sigma\sigma_{g+1}}(=v(h_\sigma))$,
$m_{\sigma_0}< m_{\sigma_1} <\ldots < m_{\sigma_{\ell}}$,
$m_{\tau_1}< m_{\tau_2} <\ldots < m_{\tau_{\ell-1}}$, and either
$\ell = g+1$ (if the length $c$ of the last tail is positive) or
$\ell=g$ (otherwise). Moreover, there is no cancellation in the
right hand side of the equation (\ref{eq2}) (since $m_{\tau_i}\neq
m_{\sigma_j}$ except possible coincidence of $\tau_g$ and
$\sigma_{g+1}$), i.e. all factors participate in (\ref{eq2})
explicitly. One can see that the second situation ($\ell=g$) takes
place if and only if $m_{\sigma_\ell}$ does not belong to the
semigroup generated by $m_{\sigma_0},\ldots, m_{\sigma_{\ell-1}}$.
In this case $m_{\sigma_0},\ldots, m_{\sigma_{\ell}}$ is the
minimal set of generators of the semigroup of values of the
divisorial valuation (see e.g. \cite{semigroup}). This  is
equivalent to the condition $e_{\ell-1}:=\gcd (m_{\sigma_0},
\ldots, m_{\sigma_{\ell-1}})
>1$.
If $\ell=g+1$, then the minimal set of generators of the semigroup
is $m_{\sigma_0},\ldots, m_{\sigma_{\ell-1}}$. Moreover,
$c=m_{\sigma_{\ell}}- m_{\tau_{\ell-1}}$.
Due to the projection formula, this means that the Poincar\'e
series of the set of divisorial valuations defines the minimal
resolution of each valuation.

Let us describe a divisorial valuation (or the corresponding
minimal resolution) by  the numbers $\seq m0{g+1}$ where, for
$i=0,1,\ldots,g+1$, $m_i:=m_{\sigma_i}$, $m_{g+1}=m_{\tau_g}$ if
$c=0$. In the last case the number $m_{g+1}$ is determined by the
Poincar\'e series of the filtration by the equation $m_{g+1} =
e_{g-1} m_{\sigma_g}$, where $e_{g-1}=\gcd(\seq{m}0{g-1})$ (in
this case the factor $(1-t^{m_{\tau_g}})$ itself does not appear
in the equation~(\ref{eq2}) for the Poincar\'e series of the
valuation).

The topological type, or equivalently the dual graph of the
minimal resolution, of a curve singularity is defined by the
topological type of each branch plus the intersection
multiplicities of pairs of branches (see \cite{zariski},
\cite{brieskorn}). This implies that the dual graph of the
minimal resolution of a set of divisorial valuations is
determined by the dual graph of the minimal resolution for each
divisor plus the intersection multiplicities of curvettes
corresponding to pairs of divisors.

Therefore it remains to show that the Poincar\'e series
$P_{\{v_i\}}({\seq t1r})$ determines these intersection
multiplicities. The projection formula permits to prove this for
two valuations. Moreover, the discussion above shows that we can
assume the dual graph of the minimal resolution of each divisor
known. Let these two divisors be described by the numbers
$\seq{m}0{g+1}$ and $\seq{m'}0{g'+1}$ respectively
($m_{\sigma_{g+1}}=m_{\sigma_{g+1}\sigma_{g+1}}$,
$m'_{\sigma_{g'+1}}=m_{\sigma'_{g'+1}\sigma'_{g'+1}}$ where the
vertices $\sigma_{g+1}$ and $\sigma'_{g'+1}$ of the dual graph
correspond to the divisorial valuations under consideration; see
the explanation above). Just as in the case of one divisorial
valuation, there is no cancellation in the equation~(\ref{eqPS})
for two valuations, i.e. all factors with
$\chi(\stackrel{\bullet}{E_\sigma})\ne 0$ are present and for all
of them the exponents $\mm_\sigma$ are different. This can be
shown, e.g., in the following way. Let $s\in\Gamma$ be
the maximal vertex which is $\le$ than both $\sigma_{g+1}$ and
$\sigma'_{g'+1}$. The statement about absence of cancellation
follows from the following facts:
\newline 1) The exponents
$\mm_{\sigma}$ are strictly increasing on $\Gamma$ with respect
to the partial order on it.
\newline 2) The ratio
$m_{\sigma,1}/m_{\sigma,2}$ of the coordinates of the exponent
$\mm_\sigma$ as a function on $\sigma$ is constant (say, equal to
$q$) on the set of vertices $\sigma$ such that $\sigma\le s$. This
ratio is (strictly) greater than $q$ on the set of vertices
$\sigma$ such that $s<\sigma\le \sigma_{g+1}$ and is (strictly)
smaller than $q$ on the set of vertices $\sigma$ such that $s <
\sigma\le \sigma'_{g'+1}$. (These properties were used in
\cite{london}. For a more precise description of the behaviour of
the ratio $m_{\sigma,1}/m_{\sigma,2}$ see in the proof of
Theorem~\ref{theo2}.)

Let $\mm_\sigma$ be a maximal exponent in the right hand side of
the equation~(\ref{eqPS}) among the factors with
$\chi(\stackrel{\bullet}{E_\sigma})=1$. For the coordinates of
$\mm_{\,\sigma}= (m_{\sigma 1}, m_{\sigma
2})\,(=(m_{\sigma\sigma_{g+1}}, m_{\sigma\sigma'_{g'+1}}))$ one
can distinguish three different situations (up to permutation of
the divisors).
\newline
a) $m_{\sigma 1}= m_{g+1}$.
This takes place if, for the valuation $v_1$,
the length $c$ of the last tail is positive.
Then the first valuation $v_1$ is equal
to $v_\sigma$ and the intersection multiplicity between the
corresponding curvettes is equal to $m_{\sigma 2}$.
\newline
b) $m_{\sigma 1} = m_{g}$ and $m_{\sigma 2}\neq m'_{g'}$.
This takes place if, for the valuation $v_1$, the length of the
last tail is equal to zero and the last dead end $\sigma_g$ for
this valuation does not participate in the minimal resolution of
the valuation $v_2$.
 Then
the intersection multiplicty between the curvettes is equal to
$e'_{g-1}\cdot m_{\sigma 2}$.
\newline
c) $m_{\sigma 1} = m_{g}$ and $m_{\sigma 2} = m'_{g'}$. This takes
place if the lengths of the last tails for both valuations are
equal to zero and the last dead ends for them coincide. Then the
intersection multiplicity between the curvettes is equal to the
minimum between $e'_{g-1}\cdot m_{\sigma 1}$ and $e_{g-1}\cdot
m_{\sigma 2}$.
\end{proof}

%%%%%%%%%%%%%%%%%%%%%%%%%%%%%%%%%%%%%% Curves.
\medskip

Let $(C,0)$ be a germ of a curve in $(\C^2,0)$, and let $C=
\bigcup_{i=1}^r C_i$ be the representation of the curve
singularity $C$ as the union of its irreducible components. For
$i\in \{1,\ldots, r\}$, let $\varphi_i : (\C,0)\to (\C^2,0)$ be a
parametrization (uniformization) of the curve $C_i$, i.e.
$\mbox{Im\,} \varphi_i =C_i$ and $\varphi_i$ is an isomorphism
between $(\C,0)$ and $(C_i,0)$ outside of the origin. For $g\in
\OO_{\C^2,0}$, let $w_i(g)$ be the order of the function germ $g$
on the component $C_i$, i.e. the exponent of the leading term in
the power series decomposition of the function germ $g\circ
\varphi_i$: $g\circ \varphi_i (\tau) = a \tau^{w_i(g)} +
\mbox{\em\ terms of higher degree}$, where $a\neq 0$ (if $g\circ
\varphi_i \equiv 0$, $w_i(g)=\infty$). The order functions $\seq
w1r$ define a multi-index filtration on the ring $\OO_{\C^2,0}$.
The Poincar\'e series of this filtration $P_C(\seq t1r)$ is called
the Poincar\'e series of the plane curve singularity $(C,0)$.
In~\cite{duke} it was shown that the Poincar\'e series $P_C(\seq
t1r)$ coincides with the Alexander polynomial in several variables
of the link $C\cap S_{\varepsilon}^3\subset S_{\varepsilon}^3$,
where $S_{\varepsilon}^3$ is the sphere of small radius
$\varepsilon$ centred at the origin in $\C^2$.

Let $\pi : ({\cal X}, {\cal D})\to (\C^2,0)$ be an embedded
resolution of the plane curve singularity $(C,0)$, whose
exceptional divisor ${\cal D}$ is the union of irreducible
components $E_{\sigma}$, $\sigma\in \Gamma$. Let $m_{\sigma
\delta}$ be defined as above. For $i\in\{1,\ldots,r\}$, let
$\alpha_i\in\Gamma$ be the index of the component $E_{\alpha_i}$
of the exceptional divisor ${\cal D}$ intersecting the strict
transform of the component $C_i$ of the curve, let $m_{\sigma,i}:=
m_{\sigma \alpha_i}$, and let $\mm_{\sigma} := (m_{\sigma, 1},
\ldots, m_{\sigma, r})\in \Z_{\geq 0}^r$. Let
$\stackrel{\circ}{E_\sigma}$ be the smooth part of the component
$E_\sigma$ in the total transform $\pi^{-1}(C)$ of the curve $C$,
i.e. $E_\sigma$ itself without intersection points with other
components of $\pi^{-1}(C)$. According to~\cite{duke} one has:
\begin{equation}\label{eqPSC}
P_{C} (\seq t1r) =
\prod\limits_{\sigma\in \Gamma} (1-
\tt^{\,\mm_\sigma})^{-\chi(\stackrel{\circ}{E_\sigma})}\; .
\end{equation}

\begin{theorem}\label{theo2}
The Poincar\'e series of a plane curve singularity
defines the minimal resolution of the curve
up to combinatorial equivalence.
\end{theorem}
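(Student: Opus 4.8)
The plan is to recover the combinatorial type of the minimal embedded resolution of $C$ directly from the product formula~(\ref{eqPSC}), in the same spirit as the proof of Theorem~\ref{theo1}; the new feature is that, as the Remark points out, the projection formula is not available, so one cannot simply set some of the $t_i$ equal to $1$ in order to reduce to one or two branches.

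First I would reduce, using the classical fact recalled above (that the topological type of $C$ is determined by the topological types of its branches and by the pairwise intersection multiplicities), to recovering from $P_C(\seq t1r)$ the topological type of each $C_i$ together with the numbers $(C_i\circ C_j)$, $i\ne j$. Two elementary observations make this concrete: by the projection formula for intersection numbers applied to the total transforms of $C_i$ and $C_j$ one has $(C_i\circ C_j)=m_{\alpha_i\alpha_j}$, the $j$-th coordinate of $\mm_{\alpha_i}$; and $C_i$ has the same topological type as the curvette $L_{\alpha_i}$. Hence all the data to be recovered are encoded by the ``essential'' vertices of the minimal-resolution tree $\Gamma$ (the root, the rupture vertices, and the arrow-bases $\alpha_i$) together with the numbers $m_{\sigma\delta}$ among them, since the chains of valence-two vertices joining consecutive essential vertices are determined by the standard continued-fraction reconstruction of a minimal resolution graph.

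Second, I would extract the essential data from $P_C$. Since the factorization of a power series as $\prod_{\mm}(1-\tt^{\,\mm})^{k_{\mm}}$ is unique, (\ref{eqPSC}) yields the multiset $\{(\mm,k_{\mm})\}$ with $k_{\mm}=\sum_{\sigma:\,\mm_\sigma=\mm}(d_\sigma-2)$, where $d_\sigma$ is the valence of $E_\sigma$ in the dual graph of $\pi^{-1}(C)$. The heart of the matter is to prove that no essential cancellation occurs: $\sigma\mapsto\mm_\sigma$ is injective on the set of $\sigma$ with $d_\sigma\ne2$, apart from the harmless possible coincidence of a rupture vertex with an arrow-base sitting on it. Then the exponents $\mm$ with $k_{\mm}=-1$ are precisely the $\mm_\sigma$ for the exceptional leaves of $\pi^{-1}(C)$, those with $k_{\mm}\ge1$ are the $\mm_\sigma$ for the vertices of valence $k_{\mm}+2$, and in every case the whole vector $\mm_\sigma$ is read off. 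This is proved, just as in points 1) and 2) of the proof of Theorem~\ref{theo1}, from the strict monotonicity of $\sigma\mapsto\mm_\sigma$ on $\Gamma$ and from the precise behaviour of the ratios $m_{\sigma,i}/m_{\sigma,j}$: along a geodesic of the tree such a ratio is constant until the paths towards $\alpha_i$ and towards $\alpha_j$ separate, and strictly monotone --- in opposite senses on the two branches --- beyond that point. Establishing this refined description of the ratios (the ``more precise description'' alluded to in the proof of Theorem~\ref{theo1}) and the resulting absence of cancellation is the step I expect to be the main obstacle; the rest is bookkeeping.

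Finally I would reconstruct the tree. From the multiplicity vectors of the visible essential vertices and from the monotonicity and ratio patterns of their coordinates one recovers the partial order on these vertices and, using $r$ and the coordinates $m_{\sigma,i}$, locates each arrow-base $\alpha_i$; when $\alpha_i$ is invisible --- i.e. has valence two in $\pi^{-1}(C)$, the ``last tail'' situation of Figure~\ref{fig1} --- it sits at the top of a chain issuing from a rupture vertex or from the root, and its place on that chain is pinned down exactly as in the discussion following~(\ref{eq2}) (the relations $c=m_{\sigma_\ell}-m_{\tau_{\ell-1}}$, $m_{g+1}=e_{g-1}m_{\sigma_g}$, etc.). Completing the remaining chains by continued fractions produces a full candidate graph; the topological type of each $C_i$ is then read off it (it is determined by the subtree of vertices $\le\alpha_i$), and each $(C_i\circ C_j)=m_{\alpha_i\alpha_j}$ is computed from the coordinates of the visible vertices by a short case analysis according to where the two geodesics to $\alpha_i$ and $\alpha_j$ diverge --- the cases a), b), c) of the proof of Theorem~\ref{theo1}, together with the extra case in which the divergence vertex is itself invisible. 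By the first step this determines the minimal embedded resolution of $C$ up to combinatorial equivalence.
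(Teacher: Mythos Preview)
Your overall strategy is plausible but differs from the paper's, and it has a genuine gap in the step you flag as ``bookkeeping''.

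First, you start from a misconception: a projection formula \emph{is} available for curves, just not the naive one. The paper proves
\[
P_{C\setminus C_i}(t_1,\ldots,\widehat{t_i},\ldots,t_r)=\Bigl[\,P_C(\seq t1r)\big/(1-\tt^{\,\mm_{\alpha_i}})\,\Bigr]_{t_i=1},
\]
and builds an \emph{inductive} argument on the number of branches: locate one index $i_0$ with $\alpha_{i_0}$ equal to a vertex $\sigma$ carrying a maximal exponent $\mm_\sigma$, recover the semigroup of $C_{i_0}$ (this is the content of the paper's Lemma), read off $\mm_{\alpha_{i_0}}=\mm_\sigma$, peel $C_{i_0}$ off via the formula above, and recurse. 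Your plan, by contrast, attempts a one-shot reconstruction of the whole tree from the multiset $\{(\mm,k_{\mm})\}$.

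Second, the gap. Your description of the invisible arrow-bases is modelled on the divisorial ``last tail'' of Figure~\ref{fig1}, but in the curve setting the picture is different. If $C_i$ has at least one Puiseux pair then $\alpha_i=\tau_g^{(i)}$ always has valence $\ge 3$ in $\pi^{-1}(C)$ and is visible. The invisible case occurs only when $C_i$ is \emph{smooth} and its strict transform hits a leaf $\sigma_q$ of the exceptional tree (a dead end, possibly the root $E_1$): then $E_{\sigma_q}$ has valence~$2$ and $\chi(\stackrel{\circ}{E_{\sigma_q}})=0$. Such an $\alpha_i$ is not ``at the top of a chain'' in the sense of the last tail, and the relations $c=m_{\sigma_\ell}-m_{\tau_{\ell-1}}$, $m_{g+1}=e_{g-1}m_{\sigma_g}$ you invoke do not locate it. Concretely, for $C_1=\{y=0\}$ and $C_2=\{y^2=x^3\}$ the minimal resolution has $\alpha_1=E_1$ invisible, and the only surviving factors are $(1-t_1t_2^3)^{-1}(1-t_1^2t_2^6)$; to recover from this where the first arrow sits and that the tree has three vertices, you must argue with the actual multiplicities, which is precisely what the paper's Lemma does. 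The paper isolates this phenomenon carefully (the set $A$ and the possible extra index $\ell$ with $\alpha_\ell\neq\sigma$) and shows that the missing dead-end generator $m_q$ is then recovered as an intersection number $m_{\sigma,k}$. Your sketch has no analogue of this step, and without it the reconstruction of the tree (and of the topological type of each smooth branch and its intersection numbers) is incomplete.
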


\begin{proof}
The equation (\ref{eqPSC}) gives the following
``projection formula": for
$i\in\{1,\ldots.r\}$ one has
$$
P_{C\setminus C_i}(\seq t1{i-1}, \seq t{i+1}r) =
\left[ \frac{P_C(\seq t1r)}
{(1-\tt^{\,\mm_{\alpha_i}})}\right]_{_{t_i=1}}\; .
$$

The minimal resolution of an irreducible plane curve singularity
({\it a branch}) has the form shown on Figure~\ref{fig1} with
$c=0$ and with an arrow at the vertex $\tau_g$ corresponding to the
strict transform of the curve. The number $g$ is equal to the
number of Puiseux pairs of the curve and the minimal resolution
could be described by the set $\seq{m}0g$, where, for $i=0,\ldots,
g$, $m_{i} := m_{\sigma_i}$ (this set coincides with the minimal
set of generators of the semigroup of values of the curve). The
Poincar\'e series of the branch is equal to
\begin{equation}\label{eq3}
P(t) = \frac{\prod\limits_{i=1}^{g} (1-t^{m_{\tau_i}})}
{\prod\limits_{i=0}^{g} (1-t^{m_{\sigma_i}}) }\,.
\end{equation}
It is clear that the Poincar\'e series determines the topological
type of the branch.

This implies that it suffices to show that from the Poincar\'e
series of the plane curve singularity $C$ one can recover the
exponent $\mm_{\,\alpha_{i_0}}$ and the semigroup of values
$S_{C_{i_0}}$ corresponding to some index $i_0\in\{1,\ldots,r\}$,
i.e. to an irreducible component $C_{i_0}$ of $C$. Notice that,
for $j\neq i_0$, the intersection multiplicity between the
irreducible components $C_{i_0}$ and $C_j$ is equal to
$m_{\alpha_{i_0}\alpha_j}$.

For  $\sigma, \sigma'\in \Gamma$, let $s(\sigma,\sigma')$ be the
index such that $[{\bf 1},\sigma] \cap [{\bf 1},\sigma'] = [{\bf
1},s(\sigma,\sigma')]$ (here $[{\bf 1},\sigma]$ is the geodesic in
the dual graph joining the first (minimal) vertex ${\bf 1}$ with
the vertex $\sigma$). Let us fix $j,k \in \{1,\ldots,r\}$. The
ratio $m_{\sigma, j}/m_{\sigma, k}$ as a function on $\sigma$ is
constant on $[{\bf 1},s(\alpha_j,\alpha_k)]$ and it is strictly
increasing on the geodesic $[s(\alpha_j,\alpha_k),\alpha_j]$
joining the vertex $s(\alpha_j,\alpha_k)$ with $\alpha_j$. For
$\sigma\notin [{\bf 1},\alpha_j]\cup [{\bf 1},\alpha_k]$, the
ratio $m_{\sigma, j}/m_{\sigma, k}$ is equal to $m_{\sigma',
j}/m_{\sigma', k}$, where $\sigma'$ is the unique vertex such that
$[{\bf 1},\sigma'] = ([{\bf 1},\alpha_j]\cup [{\bf 1},
\alpha_k])\cap [{\bf 1},\sigma]$. Just as in the proof of
Theorem~\ref{theo1}, one can see that there is no cancellation in
the equation~(\ref{eqPSC}) (for the minimal resolution).

Let $\sigma\in \Gamma$ be such that the exponent $\mm_{\sigma}$
is a maximal one among the set of exponents $\mm_{\tau}$ with
$\chi(\stackrel{\circ}{E_\tau})\neq 0$. Notice that, in contrast
to the case of divisorial valuations, a maximal
exponent is reached among components with
$\chi({\stackrel{\circ}{E_\tau}})< 0$.
If there exists $\tau\in \Gamma$ such that
$\tau > \sigma$ and $E_\tau\cap E_{\sigma}\neq \emptyset$
then $\mm_{\tau} > \mm_{\sigma}$. Thus there exists an index
$j\in\{1,\ldots,r\}$ such that $\sigma = \alpha_j$ (otherwise
the exponent $\mm_{\sigma}$ is not a maximal one). From the
comments above, it follows that the indices $j$ such that
$\sigma = \alpha_j$ are some of elements of the nonempty set
$A\subset \{1,\ldots, r\}$
consisting of the elements $j$ such that
$$
\frac{m_{\sigma, j}}{m_{\sigma, k}} \ge \frac{m_{\tau,
j}}{m_{\tau, k}} \quad \mbox{for }\forall k\in \{1,\ldots,r\}
\mbox{ and } \forall \tau\in \Gamma \mbox{ such that }
\chi(\stackrel{\circ}{E_\tau})\neq 0\;,
$$
i.e. such that the binomial $(1-\tt^{\mm_\tau})$ is present in the
equation~(\ref{eqPSC}). Let $\ell\in A$ be such that
$\alpha_\ell\neq \sigma$. Such $\ell$ exists only if the strict
transform of the component $C_{\ell}$ of the curve $C$ interstects
transversally the last dead end corresponding to a branch $C_j$
with $\alpha_j = \sigma$. In this case $\alpha_\ell <\alpha_j$ and
therefore $m_{\sigma, \ell}  < m_{\sigma, j}$ for any $j$ such
that $\alpha_j = \sigma$. Notice that, if such $\ell$ exists, it
is unique. This means that for $i_0\in A$ such that $m_{\sigma,
i_0}\ge m_{\sigma, j}$ for all $j\in A$ one has $\alpha_{i_0} =
\sigma$. Notice that such an index $i_0$ is, in general, not
unique.

Now the proof is a consequence of the following statement:

\begin{lemma}
The semigroup of values of the irreducible curve  $C_{i_0}$ is
generated by the elements of the set:
$$
\{m_{\tau, i_0} : \mbox{\rm{ for }} \tau \mbox{\rm{ such that
}}\chi(\stackrel{\circ}{E_\tau})=1 \} \cup \{m_{\sigma, j} : j\neq
i_0 \}\; .
$$
\end{lemma}

\begin{proof}
Obviously all elements of the described set belong to the
semigroup of values of the curve $C_{i_0}$. The minimal set of
generators of the semigroup of values of $C_{i_0}$ is the set $m_0
< m_1 < \ldots < m_g$ where $m_i := m_{\sigma_i}$
($i=0,1\ldots,g$) (see Figure~\ref{fig1}). If the exponent
$m_{q}$ corresponding to a dead end $\sigma_q$ of the dual graph
of $C_{i_0}$ does not appear in the Poincar\'e series of $C$, i.e.
if $\chi(\stackrel{\circ}{E_{\sigma_q}})=0$, then there exists
$k\in\{1,\ldots, r\}$ such that the strict transform $C'_k$ of the
component $C_k$ in the minimal resolution of $C_{i_0}$ intersects
$E_{\sigma_q}$. If $C'_k$ is not resolved yet, there must exist a
component $E_\tau$ with $\chi(\stackrel{\circ}{E_{\tau}})=1$,
$\tau > \sigma_q$ (produced by blowing-ups at points corresponding
to $C'_k$) such that $m_{\tau, i_0} = m_{\sigma, i_0} = m_q$.
Thus, the exponent $m_q$ does not appear among the ones in the
first set of the statement if and only if the strict transform
$C'_k$ of $C_k$ in the minimal resolution of $C_{i_0}$ is smooth
and transversal to $E_{\sigma_q}$. But in this case $m_{q}$
coincides with the intersection multiplicity between $C_{i_0}$ and
$C_k$ and so $m_q = m_{\sigma, k}$.
\end{proof}

This proves the Theorem. \end{proof}

\begin{remark}
One may consider the multi-index filtration defined by a set of
valuations corresponding to several irreducible plane curve
singularities
and several divisorial valuations.
One has a formula for the Poincar\'e series of such filtration
similar to~(\ref{eqPS}) and~(\ref{eqPSC}).
One could ask whether this
Poincar\'e series defines the dual graph of the minimal resolution
of the set of curves and divisors.
The following example shows
that this is, generally speaking, not the case. Let us consider
the divisorial
valuation and the curve defined by the minimal resolution shown
on Figure~\ref{fig2}.
\begin{figure}[h]
$$
\unitlength=1.00mm
\begin{picture}(80.00,10.00)(0,0)
\thicklines
\put(10,5){\line(1,0){23}}
\put(36,5){\circle*{0.5}}
\put(39.6,5){\circle*{0.5}}
\put(42,5){\circle*{0.5}}
\put(45,5){\circle*{0.5}}
\put(48,5){\line(1,0){23}}
\put(71,5){\vector(2,1){10}}
\thinlines
\put(10,5){\circle*{1}}
\put(20,5){\circle*{1}}
\put(30,5){\circle*{1}}
\put(51,5){\circle*{1}}
\put(61,5){\circle*{1}}
\put(61,5){\circle{2}}
\put(71,5){\circle*{1}}
\put(9,8){$1$}
\put(19,8){$2$}
\put(29,8){$3$}
\put(50,8){$p$}
\put(56,0){$p+2$}
\put(70,0){$p+1$}
\end{picture}
$$
\caption{Example.}
\label{fig2}
\end{figure}
The component $E_{p+2}$ of the divisor corresponds to a
curvette of type $A_{2p}$, i.e. $\{y^2+x^{2p+1}=0\}$, the
curve under consideration is
$\{y=0\}$. One can easily see that, for any $p$, the Poincar\'e
series of the corresponding filtration is
$$
P(u,t) = (1-t u^2)^{-1} \; ,
$$
where $u$ (respectively $t$) is the variable corresponding to the
divisorial valuation (to the curve respectively).
\end{remark}

\end{document}